\documentclass{amsart}
\usepackage{graphicx} 
\usepackage{amsmath, amssymb}
\usepackage{amsthm}
\usepackage{mathtools}
\usepackage{bm}
\usepackage{tikz-cd}
\usepackage{hyperref}
\usepackage[capitalize,noabbrev]{cleveref}
\usepackage[font=small,labelfont=bf]{caption}

\newcommand{\RR}{\mathbb{R}}

\DeclareMathOperator{\Var}{Var}

\numberwithin{equation}{section}
\newtheorem{thm}[equation]{Theorem}
\newtheorem{cor}[equation]{Corollary}

\theoremstyle{definition}
\newtheorem{defn}[equation]{Definition}
\newtheorem{rem}[equation]{Remark}
\newtheorem{ex}[equation]{Example}

\subjclass{Primary 92C42; Secondary 26D10, 92B05, 82B05}

\keywords{Hill function, input-output response, sharpness, Hopfield barrier, linear framework}

\title{Sharpness characterizes Hill functions}
\author{Marc Stephan}
\address{Center for Systems Biology Dresden, Pfotenhauerstr. 108, 01307 Dresden, Germany}
\address{Max Planck Institute of Molecular Cell Biology and Genetics, Pfotenhauerstr. 108, 01307 Dresden, Germany}
\address{TUD Dresden University of Technology, Institute for Geometry, 01069 Dresden, Germany}
\email{mastepha@mpi-cbg.de}

\date{\today}

\begin{document}

\begin{abstract} While long treated as empirical fits, Hill functions have been postulated to be the universal Hopfield barrier for sharpness of input-output responses by Martinez-Corral, Nam, DePace, and Gunawardena. A Hopfield barrier is a fundamental limit on how well biological systems can process information without expending energy. Their case rested on numerical findings for Hill coefficients $4$ and $6$. We give a precise formulation and proof of this: measuring sharpness by the supremum of the derivative in semi-log scale, any rational function $r(x)=(\alpha_0+\alpha_1 x+ \cdots +\alpha_n x^n)/(\beta_0 + \beta_1 x+ \cdots + \beta_n x^n)$ with real coefficients $0\leq \alpha_i\leq \beta_i$ has sharpness at most $n/4$, with equality if and only if $r$ is a Hill function with Hill coefficient $n$.
\end{abstract}

\maketitle

\section{Introduction}\label{sec:introduction}
Hill functions 
\[H_{K,n}(x) = \frac{x^n}{K^n+x^n}\]
with parameter $K>0$ and positive integer Hill coefficient $n$ are widely used in biology, including biochemistry, pharmacology, and physiology. The articles \cite{weiss_hill_1997} and \cite{santillan_use_2008} survey a range of applications and offer cautionary remarks, while \cite{engel_hundred_2013, gesztelyi_hill_2012} provide historical reviews. For $n=1$, the Hill function has a theoretical basis in the classical Michaelis–Menten input-output response. By contrast, for $n>1$, it has been used primarily as an empirical fit to data since its introduction by A.\ V.\ Hill in 1910 \cite{hill_1910}. Recently, Martinez-Corral et al.~have given a biophysical justification for the use of Hill functions in \cite{martinez-corral_hill_2024}, reducing it to a mathematical question about sharpness of rational functions, for which we provide an answer.

We prove in \cref{cor:hillfunction_barrier} that sharpness of any rational function 
\begin{equation}\label{eq:rationalfunctions}
    r(x)=\frac{\alpha_0+\alpha_1 x+ \cdots +\alpha_n x^n}{\beta_0 + \beta_1 x+ \cdots + \beta_n x^n}
\end{equation}
defined on $\RR_{>0}$ with real coefficients $0\leq \alpha_i\leq \beta_i$, measured by the supremum of the derivative in semi-log scale, is at most $n/4$, with equality if and only if $r$ is a Hill function with Hill coefficient $n$. The connection to biology is as follows.

Inspired by Hopfield's insights in \cite{hopfield_kinetic_1974} on the role of energy expenditure in biosynthetic error correction, Estrada et al.~identified a Hopfield barrier for sharpness in gene regulation in \cite{estrada_information_2016}. The Hopfield barrier is a postulated limit on how well an information processing task can be undertaken at thermodynamic equilibrium. This line of work culminated in \cite{martinez-corral_hill_2024} which argues, on the basis of numerical calculations for $4$ and $6$ input binding sites, that the Hill function with coefficient $n$
is the universal Hopfield barrier for the sharpness of input-output responses if an input ligand binds at $n$ sites. To formulate the problem precisely in terms of rational functions, they apply the linear framework for input-output responses; see \cite{nam_linear_2022,nam_linear_2023}. A small example of a linear framework graph is shown in \cref{fig:intro_combined}A. The response $r(x)$ is a function of a chosen input ligand with concentration $x$, while the concentrations of all other ligands are held constant. Combining the linear framework with the coarse graining from \cite{biddle_allosteric_2021}, the key conclusion for our purposes is
that input-output responses of any Markov process model at thermodynamic equilibrium with $l$ input binding sites can be expressed by a rational function as in \eqref{eq:rationalfunctions} such that $n\leq l$, and with the additional condition that $\beta_i>0$ for all $0\leq i\leq n$. Hill functions of coefficient $n>1$ are not of this form since their intermediate denominator coefficients $\beta_1,\ldots, \beta_{n-1}$ vanish. To include them, we allow $\beta_i=0$ in \cref{cor:hillfunction_barrier}.

Input-output responses of Markov process models at steady states that are not of thermodynamic equilibrium can be described by rational functions as well, but the degrees of the numerator and denominator polynomials can be much larger than the number of binding sites; see \cite[page~6]{martinez-corral_hill_2024}, \cite{owen_size_2023}. In this sense, Hill functions provide a Hopfield barrier in that a steady state input-output response with $l$ binding sites that exceeds the sharpness of the Hill function $H_{K,l}(x)$ requires energy to maintain the steady state. Mathematically, the Hopfield barrier can already be surpassed with just one binding site as shown in \cref{ex:not_thermal_equilibrium}.

Our measure of sharpness\,---\,the maximal steepness in semi-log scale\,---\,differs from that of \cite{martinez-corral_hill_2024} and deserves comment. Semi-log scale is standard for fitting biological data \cite[Chapter~A.1]{motulsky_fitting_2004}, and we use it here with the natural logarithm $\log$ rather than base $10$. This measure is automatically scale invariant.

\subsection*{Outline}
In \cref{sec:input-output}, we recall the sharpness measure from \cite{martinez-corral_hill_2024} and relate it to ours. The characterization of Hill functions given by \cref{cor:hillfunction_barrier} is proved in \cref{sec:hopfield} as a consequence of \cref{thm:inequality_rational_function}. As an application, we consider an allosteric system with two confirmations and one binding site in \cref{sec:allosteric} and show that it can surpass the Hopfield barrier for steady states that are not of thermodynamic equilibrium.

\subsection*{Acknowledgments and AI disclosure} It is my pleasure to thank Heather Harrington, Oskar Henriksson, and Adam Lamson for valuable feedback, and Daniela Egas Santander for helpful conversations and support for plotting \cref{fig:intro_combined}C. A first instance of \cref{cor:hillfunction_barrier} has been developed in conversation with Anthropic Claude Opus 4.6 with the use of semi-log scale derivatives and Popoviciu's inequality suggested by the LLM. Claude Sonnet 4.6 has been used to polish parts written by the author in \cref{sec:introduction,sec:discussion}, and to write Python code to plot \cref{fig:intro_combined} and \cref{fig:random_functions_rprime_x_normalized}. The parameter values in \cref{ex:not_thermal_equilibrium} have been found in conversation with OpenAI GPT-5.4. The author assumes responsibility for all content.

\section{Sharpness of input-output responses}\label{sec:input-output}
Let $n\geq 0$ be a non-negative integer and let $r \colon \RR_{>0} \to \RR$ be a rational function
    \[r(x)=\frac{\alpha_0+\alpha_1 x + \ldots +\alpha_n x^n}{\beta_0 + \beta_1 x + \ldots + \beta_n x^n}\]
with real coefficients $0\leq \alpha_i\leq \beta_i$ for $i=0,\ldots, n$.

\begin{defn}
    The \emph{(absolute) sharpness} of $r$ is the supremum of $\left|\frac{dr}{d\log(x)}\right|$.
\end{defn}
Depending on the context, one may consider the \emph{positive sharpness} (as in the introduction) and the \emph{negative sharpness} defined by the supremum and infimum of $\frac{dr}{d\log(x)}$, respectively.
For $z=\log(x)$ we have 
\[\frac{d}{dz} (r(e^z)) = \frac{dr}{dx}(e^z)\cdot e^z = r'(x)\cdot x\] so that sharpness is scale-invariant. Indeed, any rescaled function
$\tilde{r}(x)=r(cx)$ for $c>0$ satisfies 
\[\sup_{x>0} \left|\tilde{r}'(x) \cdot x \right|= \sup_{x>0} \left|c\cdot r'(cx) \cdot x \right|= \sup_{x>0} \left|x \cdot r'(x)\right|\,.\]
Since $\frac{dr}{d\log x} = x\cdot r'(x)$, sharpness involves both steepness of $r$ and position $x$, but differs from the methodology in \cite{martinez-corral_hill_2024}. There, sharpness is defined by pairs of non-negative real numbers $(p(r),s(r))$ as follows: Consider the normalized function $q(y)=r(x_{0.5}\cdot y)$, where $x_{0.5}$ is the infimum over all $x>0$ for which $r(x)$ is halfway in-between the infimum and supremum of $r$. The largest absolute value of the derivative of $q(y)$ defines the `steepness' $s(r)$, and $p(r)$ denotes its first occurring position.

While the precise relationship between the two measures of sharpness remains to be explored, they can be calculated for Hill functions. 

Henceforth, we write $H_n(x)=\frac{x^n}{1+x^n}$ for the Hill function $H_{K,n}(x)$ with $K=1$

\begin{ex}
Let $H_{K,n}(x)$ be a Hill function with coefficient $n>0$. In this case, $x_{0.5}=K$ so that the normalization is $H_n(y)=\frac{y^n}{1+y^n}$. The corresponding sharpness pair of position and steepness is
\[p(H_{K,n})=\left(\frac{n-1}{n+1}\right)^{1/n},\quad s(H_{K,n})=\frac{(n-1)^{(n-1)/n}(n+1)^{(n+1)/n}}{4n}\,.\]
We will show in the proof of \cref{cor:hillfunction_barrier}, that the sharpness $\sup_x H'_{K,n}(x)\cdot x$ is $n/4$ with maximal value at $x=K$. In particular, the sharpness for the normalized Hill function $H_n(y)$ is achieved at $y=1$. For $n\to \infty$ the position $p(H_{K,n})$ converges to $1$ and the difference 
\[p(H_{K,n})\cdot s(H_{K,n})-\frac{n}{4}= \frac{n^2-1}{4n}-\frac{n}{4}\]
converges to zero. 
\end{ex}

\cref{fig:intro_combined}B.1 displays examples of rational functions $r$ for $n=4$ with corresponding normalizations $q$ in \cref{fig:intro_combined}B.2 and pairs $(p(r),s(r))$ in \cref{fig:intro_combined}C. The sharpness $n/4=1$ of the Hill function $H_4$ bounds the sharpness of $r$. In semi-log scale, normalizing and in fact any rescaling corresponds to a horizontal shift. In particular, the sharpness of $r$ and of $q$ agree as illustrated in \cref{fig:intro_combined}D.1, D.2.

\begin{figure}[!h]
    \centering
      \captionsetup{width=\linewidth}
    \makebox[\textwidth][c]{
        \includegraphics[width=1.2\textwidth]{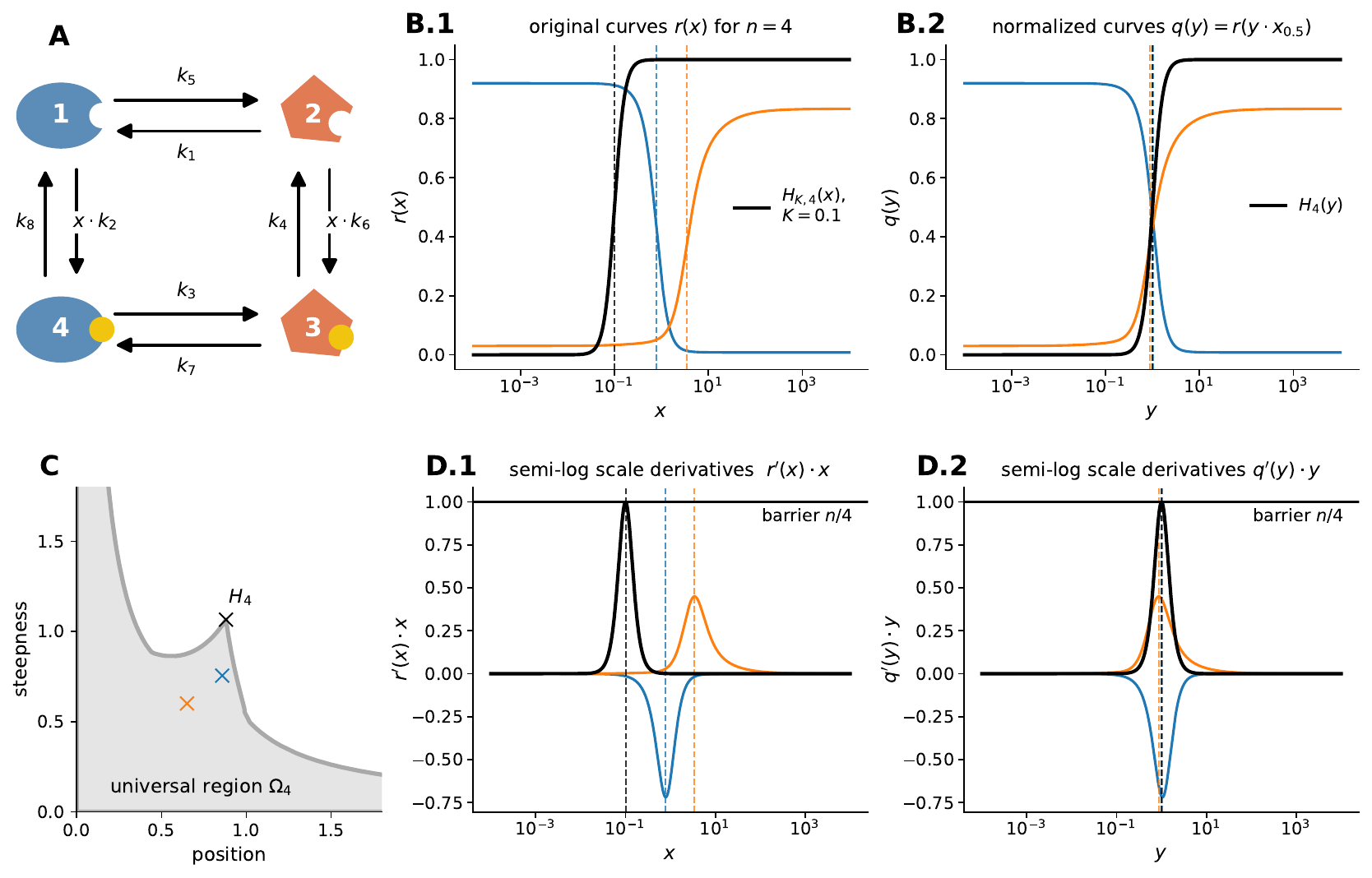}
        }
    \caption{Sharpness and the Hill function $H_4(x)=\frac{x^4}{1+x^4}$ as the universal Hopfield barrier for $n=4$. (A) Linear framework graph representing the binding of a ligand (yellow disk) to a biomolecule with one binding site, but two conformations. (B.1) Two example rational functions $r$ (blue and orange) satisfying the conditions from \cref{cor:hillfunction_barrier} for $n=4$ and the Hill function $H_{K,4}(x)= \frac{x^4}{(1/10)^4 +x^4}$ for $K=1/10$ plotted in semi-log scale. (B.2) Corresponding normalized functions. When plotting in semi-log scale, the normalization $q$ of the rational function $r$ is the horizontal shift satisfying $q(1)=(M-m)/2$ and $q(y)<(M-m)/2$ for $y<1$, where $m=\inf_x r(x)$ and $M=\sup_x r(x)$. (C) Steepness and position for the rational functions $r$ (blue and orange crosses) and the Hill function $H_4$ (black cross) together with the (numerically approximated) universal region $\Omega_4$ of all possible pairs from \cite{martinez-corral_hill_2024}, \cite{martinez-corral_rosamcuniversal-boundaries-hopfield-barrier_2024}. (D.1) The sharpness of $r$ defined as the supremum of $\left|\frac{dr}{d\log(x)}\right|=\left|r'(x)\cdot x\right|$ is bounded by the sharpness $n/4=1$ of the Hill function. The vertical dashed line is at the $x$-value where the maximum of $|r'(x)\cdot x|$ occurs. (D.2) Sharpness for the normalizations $q(y)$. The plot of $q'(y)\cdot y$ is a horizontal shift of the corresponding plot for $r'(x)\cdot x$ from (D.1).}
    \label{fig:intro_combined}
\end{figure}

\section{Hill functions as the universal Hopfield barrier}\label{sec:hopfield}
We will establish that Hill functions are the universal Hopfield barrier for sharpness in \cref{cor:hillfunction_barrier} as a consequence of the following result in which we consider more general coefficients.
\begin{thm}\label{thm:inequality_rational_function} Let $n\geq 0$ be a non-negative integer and let $r \colon \RR_{>0} \to \RR$ be a rational function
    \[r(x)=\frac{\alpha_0+\alpha_1 x + \ldots +\alpha_n x^n}{\beta_0 + \beta_1 x + \ldots + \beta_n x^n}\]
with real coefficients $\alpha_i$ and nonnegative coefficients $\beta_i\geq 0$ such that $\alpha_i=0$ whenever $\beta_i=0$. Let 
\begin{align*}
    m_{\alpha/\beta} = \min_{i} \alpha_i/\beta_i \quad \text{and}\quad M_{\alpha/\beta} =\max_{i} \alpha_i/\beta_i
\end{align*}
where the minimum and maximum are taken over all $i$ such that $\beta_i\neq 0$.

Then the semi-log scale derivative $\frac{d}{d \log(x)} r = x \cdot r'(x)$ is bounded by
\[\left|x\cdot r'(x) \right| \leq 
\frac{(M_{\alpha/\beta}-m_{\alpha/\beta})n}{4}\]
for any $x>0$. The bound is obtained for some $x$ if and only if $r(x)$ is of the form
\[r(x)=\frac{\alpha_0 + \alpha_n x^n}{\beta_0 + \beta_n x^n}\,.\]
\end{thm}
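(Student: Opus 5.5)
Fix $x>0$ and interpret $r(x)$ probabilistically. Put $w_i = \beta_i x^i\geq 0$ and $Z=\sum_i w_i$, and define a probability distribution on the index set $\{i:\beta_i\neq 0\}$ by $p_i=w_i/Z$. Set $c_i=\alpha_i/\beta_i$. Then $r(x)=\sum_i p_i c_i=\mathbb{E}[c]$, where the random index $I$ is distributed according to $p$. Since $x\frac{d}{dx}w_i = i\,w_i$, differentiating in $\log x$ gives the covariance identity
\[
x\, r'(x)=\sum_i p_i\, i\, c_i-\Big(\sum_i p_i i\Big)\Big(\sum_i p_i c_i\Big)=\operatorname{Cov}(I,c_I).
\]
(We may assume $Z>0$; otherwise $r$ is undefined on $\RR_{>0}$.)

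Next apply Cauchy--Schwarz: $|\operatorname{Cov}(I,c_I)|\leq \sqrt{\Var(I)\Var(c_I)}$. Popoviciu's inequality bounds the variance of a random variable with values in an interval $[a,b]$ by $(b-a)^2/4$. The index $I$ takes values in $[0,n]$, so $\Var(I)\leq n^2/4$. The value $c_I$ lies in $[m_{\alpha/\beta},M_{\alpha/\beta}]$, so $\Var(c_I)\leq (M_{\alpha/\beta}-m_{\alpha/\beta})^2/4$. Multiplying these and taking the square root gives the bound $(M_{\alpha/\beta}-m_{\alpha/\beta})n/4$.

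For the equality case, first handle the degenerate situations. If $M_{\alpha/\beta}=m_{\alpha/\beta}$, then $r$ is constant and the bound $0$ is attained at every $x$. In that case $r=c$, and since $\alpha_i=c\beta_i$ for every $i$, one must check the claimed form $(\alpha_0+\alpha_nx^n)/(\beta_0+\beta_nx^n)$. The constant case needs care: the statement reads most naturally as saying $r$ equals such a function, and a constant does, since $r=c=(c+cx^n)/(1+x^n)$. I will note this reading explicitly.

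In the nondegenerate case with $n\geq1$, equality forces $\Var(I)=n^2/4$. Equality in Popoviciu holds exactly when the distribution is concentrated half and half on the endpoints, so $p_0=p_n=1/2$ and all other $p_i=0$. Because $x>0$, this gives $\beta_i=0$ for $0<i<n$, hence $\alpha_i=0$ for those $i$ as well, and $r$ has the claimed form. When $n=0$ the bound is $0$ and $r$ is constant, so this case is immediate.

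For the converse, take $r=(\alpha_0+\alpha_nx^n)/(\beta_0+\beta_nx^n)$ with $\beta_0,\beta_n>0$. Write $r=c_0+(c_n-c_0)H(x)$ with $H(x)=\beta_nx^n/(\beta_0+\beta_nx^n)$, a Hill function. Then $xr'=(c_n-c_0)\,nH(1-H)$, which attains $|c_n-c_0|\,n/4$ at the point where $H=1/2$; here $|c_n-c_0|=M_{\alpha/\beta}-m_{\alpha/\beta}$. If $\beta_0$ or $\beta_n$ vanishes, $r$ is constant and this falls into the degenerate case above.

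The main obstacle is bookkeeping rather than analysis: stating the equality case correctly when some $\beta_i$ vanish or $r$ is constant. The core inequality follows at once from the covariance identity.
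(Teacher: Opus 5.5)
Your proof is correct and follows essentially the same route as the paper: the weights $p_k=\beta_k x^k/D(x)$ turn $x\,r'(x)$ into the covariance of the index $k$ and the ratio $\alpha_k/\beta_k$. This covariance is bounded by Cauchy--Schwarz and Popoviciu's inequality applied twice; equality forces the index distribution onto $\{0,n\}$, and the converse is checked by explicit computation. Your explicit treatment of the constant case, which the paper only calls straightforward, is a sensible clarification: there you have to read ``of the form'' as ``equal to such a function'', since for instance when $\beta_0=\beta_n=0$ the literal expression $(\alpha_0+\alpha_nx^n)/(\beta_0+\beta_nx^n)$ is undefined.
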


\begin{proof}
It is straightforward to check the claims for constant rational functions. From now on, suppose that $r(x)$ is not constant, and hence $m_{\alpha/\beta}<M_{\alpha/\beta}$.

Let $I$ be the set of $k\in\{0,\ldots, n\}$ with $\beta_k\neq 0$. Thus $D(x)=\sum_{k\in I} \beta_k x^k$ is the denominator of $r(x)$ and the numerator is $N(x)=\sum_{k\in I} \alpha_k x^k$ since $\beta_k=0$ implies $\alpha_k=0$ for any $k$. 

For $k\in I$ we write $w_k=\alpha_k/\beta_k$ and $p_k(x)=\beta_k x^k/D(x)$.  The definition of $p_k(x)$ is chosen so that $\sum_{k\in I} p_k = 1$.

We obtain
\begin{align*}
        x \cdot r' &= x \frac{N'}{D} - x\frac{D'N}{D^2} \\
            &= \sum_{k\in I} k w_k p_k  - r \sum_{k\in I} k p_k \\
            &= \sum_{k\in I} (w_k -r) k p_k\,.
\end{align*}
Let $\overline{k} =\sum_k  k p_k$. Using that $\sum_k (w_k-r)p_k=0$ and the Cauchy-Schwarz inequality, we further obtain
\begin{align*}
    \left| x \cdot r' \right| & = \left|\sum_{k} (w_k -r) k p_k \right|\\
 &= \left|\sum_k  (w_k -r) k p_k - \overline{k} \sum_k (w_k -r) p_k \right| \\
 &= \left| \sum_k (w_k-r)(k-\overline{k}) p_k \right| \\
 &\leq \sqrt{\sum_k (w_k-r)^2p_k} \sqrt{\sum_k (k-\overline{k})^2p_k} \,.
\end{align*}
We will apply Popoviciu's inequality on variances: For real numbers $a<b$ and any real-valued random variable $X$ bounded by $P(a\leq X\leq b)=1$, the variance is bounded by
\[\Var(X)=E\left[(X-E(X))^2\right]\leq (b-a)^2/4\]
with equality if and only if $P(X=a)=1/2=P(X=b)$; see \cite{popoviciu_1935,muilwijk_1966,bhatia_davis_2000}. For any $x>0$ we consider $I$ together with the function $k\mapsto p_k(x)$ as a finite probability space.  The functions $X(k)=w_k$ and $Y(k)=k$ are bounded random variables on $I$. The random variable $X$ is bounded by $m_{\alpha/\beta}\leq X(k)\leq M_{\alpha/\beta}$ by definition of $m_{\alpha/\beta}$ and $M_{\alpha/\beta}$, and has expected value $\sum_k w_k p_k(x)= r(x)$. The random variable $Y$ is bounded by $0\leq Y(k)\leq n$ and has expected value $\overline{k}$ by definition.
Now, Popoviciu's inequality applied twice yields
\begin{align*}
    \left|x\cdot r'(x) \right| \leq \sqrt{(M_{\alpha/\beta}-m_{\alpha/\beta})^2/4} \cdot \sqrt{n^2/4} = \frac{(M_{\alpha/\beta}-m_{\alpha/\beta})n}{4}
\end{align*}
as desired.

If $\left|x_0\cdot r'(x_0)\right|=\frac{(M_{\alpha/\beta}-m_{\alpha/\beta})n}{4}$ for some $x_0$,
then we need to have equality in Popoviciu's inequality for $X$ and $Y$. In particular, $w_k\in \{m_{\alpha/\beta},M_{\alpha/\beta}\}$ and $k\in \{0,n\}$ for any $k\in I$. The latter condition implies that $\beta_k=0$ for $k=1,\ldots, n-1$. Thus by assumption we have $\alpha_k=0$  for $k=1,\ldots,n-1$ and $r$ is of the form 
\[r(x)=\frac{\alpha_0+\alpha_n x^n}{\beta_0+ \beta_n x^n}\,.\] 
Any such non-constant $r$ satisfies $\beta_0\neq 0$ and $\beta_n\neq 0$. Finally, the function
\[\left|x \cdot r' \right|= \left|n\frac{(\alpha_n\beta_0-\alpha_0\beta_n)x^n}{(\beta_0+\beta_nx^n)^2}\right|
\]
reaches the desired upper bound at $x=(\beta_0/\beta_n)^{1/n}$.
\end{proof}

\begin{rem}\label{rem:bhatia-davis}
    Popoviciu's inequality for a random variable $X$ bounded by $P(a\leq X\leq b)=1$ follows from the Bhatia--Davis inequality, i.e., that 
    \[\Var(X)\leq (b-E(X))(E(X)-a).\]
    In the situation of \cref{thm:inequality_rational_function}, the Bhatia--Davis inequality provides the bound
    \[\left|x\cdot r'(x)\right| \leq \sqrt{(M_{\alpha/\beta}- r(x))(r(x)-m_{\alpha/\beta})}\cdot \frac{n}{2}.\]
\end{rem}

In addition to the positive Hill functions $H_n(x)=\frac{x^n}{K^n+x^n}$ to describe activator-driven expression, negative Hill functions $\frac{K^n}{K^n + x^n}=\frac{x^{-n}}{K^{-n}+x^{-n}}$ with $K>0$ are used to describe repression.

\begin{cor}\label{cor:hillfunction_barrier}
    Let $n>0$ be  a positive integer and let $r \colon \RR_{>0} \to \RR$ be a rational function
    \[r(x)=\frac{\alpha_0+\alpha_1 x + \ldots +\alpha_n x^n}{\beta_0 + \beta_1 x + \ldots + \beta_n x^n}\]
with real coefficients $0\leq \alpha_i\leq \beta_i$ for $i=0,\ldots,n$. Then the semi-log scale derivative $\frac{d}{d \log(x)} r = x \cdot r'(x)$ is bounded by
\[-\frac{n}{4}\leq x\cdot r'(x) \leq 
\frac{n}{4}\]
for any $x>0$. The upper bound is obtained for some $x$ if and only if $r(x)$ is a Hill function. The lower bound is obtained for some $x$ if and only if $r(x)$ is a negative Hill function.
\end{cor}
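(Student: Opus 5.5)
The plan is to deduce the corollary directly from \cref{thm:inequality_rational_function}.

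\emph{Applying the theorem.} Since $0\leq \alpha_i\leq \beta_i$, the hypothesis of the theorem holds: $\beta_i\geq 0$, and $\beta_i=0$ forces $\alpha_i=0$. We also need the denominator to be a nonzero polynomial. If all $\beta_i$ vanish, $r$ is undefined; if $r$ is constant, everything is trivial. For every $i$ with $\beta_i\neq 0$ we have $0\leq \alpha_i/\beta_i\leq 1$, so $M_{\alpha/\beta}-m_{\alpha/\beta}\leq 1$. The theorem then gives $|x\cdot r'(x)|\leq (M_{\alpha/\beta}-m_{\alpha/\beta})n/4\leq n/4$, which is the two-sided bound.

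\emph{Equality in the upper bound.} Suppose $x_0 r'(x_0)=n/4$. Both inequalities above must then be equalities. So $M_{\alpha/\beta}-m_{\alpha/\beta}=1$, which forces $\{m,M\}=\{0,1\}$. The theorem's equality case gives $r=(\alpha_0+\alpha_n x^n)/(\beta_0+\beta_n x^n)$. Since $r$ is nonconstant, $\beta_0,\beta_n>0$, and both ratios $\alpha_0/\beta_0$ and $\alpha_n/\beta_n$ lie in $\{0,1\}$ and are distinct. The explicit formula from the proof,
\[x r'(x)=n\frac{(\alpha_n\beta_0-\alpha_0\beta_n)x^n}{(\beta_0+\beta_nx^n)^2},\]
is positive exactly when $\alpha_n/\beta_n>\alpha_0/\beta_0$. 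Hence $\alpha_0=0$ and $\alpha_n=\beta_n$, so
\[r(x)=\frac{x^n}{\beta_0/\beta_n+x^n}=H_{K,n}(x),\qquad K=(\beta_0/\beta_n)^{1/n}.\]
Conversely, a Hill function has $\alpha_0=0$, $\beta_0=K^n$ and $\alpha_n=\beta_n=1$. The displayed formula evaluated at $x=K$ gives $nK^{2n}/(4K^{2n})=n/4$, so the bound is attained.

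\emph{Equality in the lower bound.} This is symmetric. We need $\alpha_0=\beta_0$ and $\alpha_n=0$, which gives $r=K^n/(K^n+x^n)$, a negative Hill function. Conversely, that function attains $-n/4$ at $x=K$.

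\emph{Main obstacle.} The only delicate step is the sign and degeneracy bookkeeping. We must exclude the case where $\beta_0$ or $\beta_n$ vanishes, and this follows because such an $r$ is constant with $x r'\equiv 0\neq \pm n/4$; here $n>0$ is used. We must also use the sign of $x r'$ to decide which of the two ratios equals $1$. Everything else follows immediately from \cref{thm:inequality_rational_function}.
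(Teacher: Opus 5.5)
Your proposal is correct and follows essentially the same route as the paper. You apply \cref{thm:inequality_rational_function} with $0\leq m_{\alpha/\beta}\leq M_{\alpha/\beta}\leq 1$, use the equality case to reduce to $r=(\alpha_0+\alpha_n x^n)/(\beta_0+\beta_n x^n)$ with $\{\alpha_0/\beta_0,\alpha_n/\beta_n\}=\{0,1\}$, and then check the Hill and negative Hill functions at $x=K$; your use of the sign of $x r'(x)$ to separate the upper from the lower bound, and your exclusion of $\beta_0\beta_n=0$ via nonconstancy, make explicit bookkeeping that the paper leaves implicit.
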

\begin{proof}
Since $0\leq \alpha_i\leq \beta_i$ the conditions to apply \cref{thm:inequality_rational_function} hold, and $0\leq \alpha_i/\beta_i\leq 1$ for any $i$ with $\beta_i\neq 0$. Thus we have $0\leq m_{\alpha/\beta}$ for the minimum $m_{\alpha/\beta}$ in \cref{thm:inequality_rational_function} and $M_{\alpha/\beta}\leq 1$ for the maximum $M_{\alpha/\beta}$ in \cref{thm:inequality_rational_function}. The bound in \cref{thm:inequality_rational_function} yields 
\[\left|x\cdot r'(x)\right| \leq \frac{M_{\alpha/\beta}-m_{\alpha/\beta}}{4}n\leq \frac{n}{4}\,.\]

If the bound $n/4$ is reached, then 
\[r(x)=\frac{\alpha_0 + \alpha_n x^n}{\beta_0 + \beta_n x^n}\]
and $m_{\alpha/\beta}=0$, $M_{\alpha/\beta}=1$ so that either $\alpha_0/\beta_0=0$ and $\alpha_n/\beta_n=1$ or $\alpha_0/\beta_0=1$ and $\alpha_n/\beta_n=0$.
The first case implies $\alpha_0=0$ and $\alpha_n=\beta_n$ so that $r(x)= \alpha_n x^n/(\beta_0+\alpha_n  x^n)=x^n/(\beta_0/\alpha_n +  x^n)$. If $\beta_0=0$, then the bound is not reached. Otherwise $r$ is a Hill function with $K=(\beta_0/\beta_n)^{1/n}$. For any Hill function $H_{K,n}(x)=\frac{x^n}{K^n+x^n}$, we have 
\[
x\cdot  H_{K,n}'(x) = \frac{nK^n  x^n}{(K^n+x^n)^2} 
\]
which reaches the maximum value $n/4$ at $x=K$.
The second case implies $\alpha_0=\beta_0$ and $\alpha_n=0$ so that $r=\frac{\alpha_0}{\alpha_0+ \beta_n x^n}$. If $\alpha_0=0$, then the bound is not reached. Otherwise, $r=\frac{K^n}{K^n+x^n}$ is a negative Hill function with $K=(\beta_0/\beta_n)^{1/n}$. For any negative Hill function, we obtain $x\cdot r'=\frac{-n K^n x^n}{(K^n+x^n)^2}$ for which the lower bound $-n/4$ is reached at $x=K$.
\end{proof}

For $r(x)$ as in \cref{cor:hillfunction_barrier}, Owen and Horowitz considered the elasticity
\[\left|\frac{d\log r(x)}{d\log x}\right|=\left| \frac{r'(x)\cdot x}{r(x)}\right|\]
and bounded it by $n$ in \cite{owen_size_2023}. Their bound for steepness in log-log scale compares to bounds for steepness in semi-log scale as follows.
\begin{rem} If we use the Bhatia--Davis inequality as in \cref{rem:bhatia-davis}, we obtain
\[\left|r'(x)\cdot x\right| \leq \sqrt{r(x)\cdot(1-r(x))}\cdot\frac{n}{2}\,.\]
This bound is stronger than the bound 
\[\left|r'(x)\cdot x\right| \leq r(x)\cdot n\]
from \cite[(32)]{owen_size_2023} if $r(x)> 1/5$, but weaker if $r(x)<1/5$. The function $\sqrt{t(1-t)}$ for $0\leq t\leq t$ has maximum value $1/2$ at $t=1/2$, explaining the significance of the function value $1/2$ for $r(x)$.
\end{rem}

The condition $\alpha_i\leq \beta_i$ is necessary in \cref{cor:hillfunction_barrier}.
\begin{ex}
     For $r(x)= (\alpha_1 x + x^2)/(1+x^2)$ the supremum over $x>0$ of the semi-log scale derivative 
     \[x\cdot r'= \frac{-\alpha_1 x^3 + 2x^2 + \alpha_1 x}{(x^2 +1)^2}
     \]
     is bounded below by its value at $x=1/2$ and thus tends to infinity as $\alpha_1\to \infty$.
\end{ex}

\begin{rem}
    If $r(x)$ in \cref{cor:hillfunction_barrier} is not constant, then we may consider the function normalized in the vertical direction $u(x)=\frac{r(x)-m_{\alpha/\beta}}{M_{\alpha/\beta}-m_{\alpha/\beta}}$
    and the stronger bound from \cref{thm:inequality_rational_function} for the function $r(x)$ is just \cref{cor:hillfunction_barrier} applied to the rational function $u(x)$.
\end{rem}

For numerical examples in \cite{martinez-corral_hill_2024}, coefficients $\beta_i$ and $\alpha_i$ are sampled as follows. Fix a real number $a>0$. Then choose $\beta_i$ with $\log_{10}(\beta_i)$ uniformly at random in the closed interval $[-a,a]$ and $\alpha_i$ with $\log_{10}(\alpha_i)$ uniformly at random in $[-a, \log_{10}(\beta_i)]$. \cref{fig:random_functions_rprime_x_normalized} provides three such examples of rational functions $r$ with $a=4$ and numerator and denominator polynomials of degree $n=8$ together with the corresponding normalized functions $u(x)$.
The Hill function $H_8(x)=\frac{x^8}{1+x^8}$ with sharpness $n/4=2$ is the universal Hopfield barrier.

\begin{figure}[!h]
    \centering
    \captionsetup{width=\linewidth}
    \makebox[\textwidth][c]{
        \includegraphics[width=1.3\textwidth]{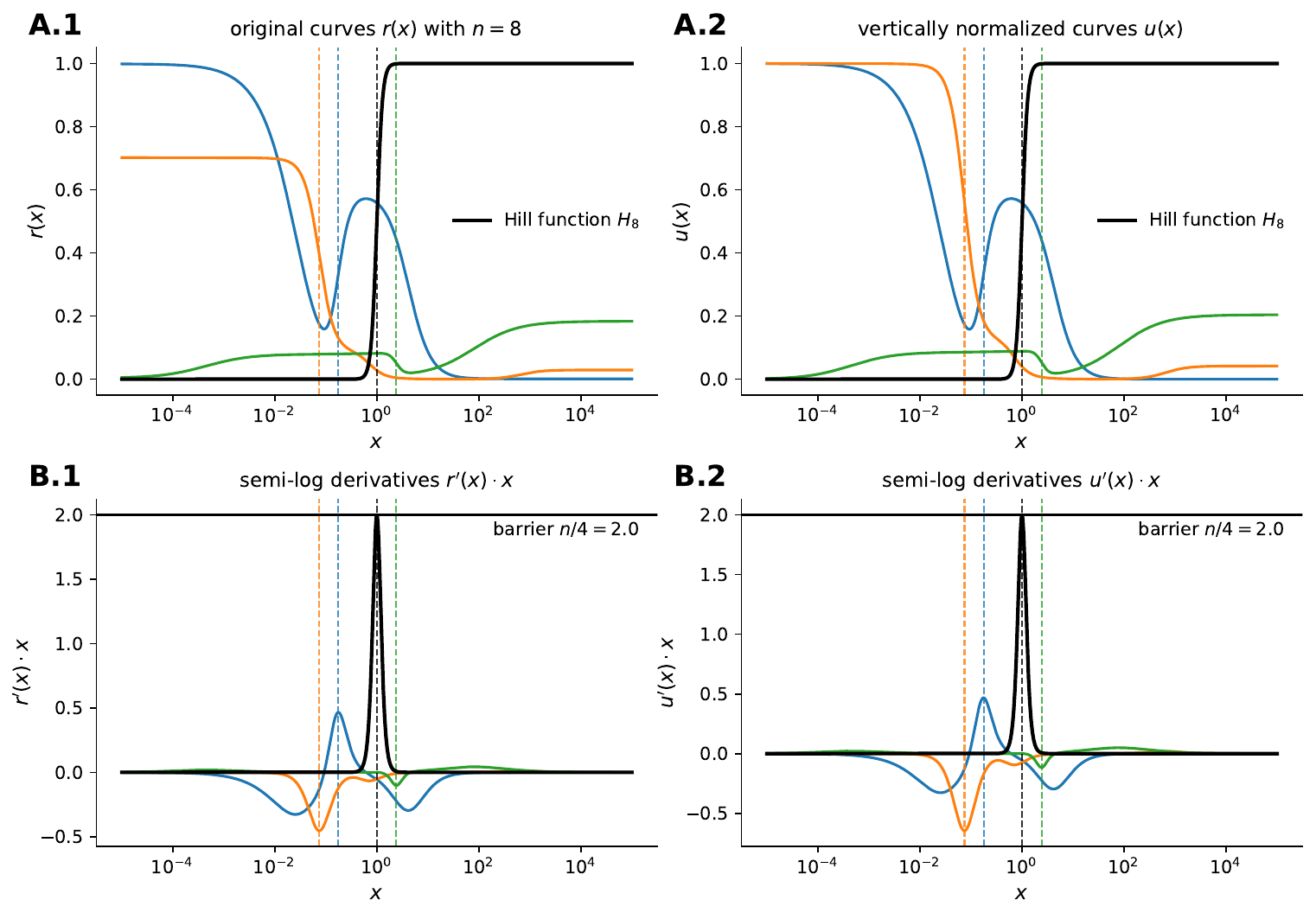}
        }
    \caption{Sharpness of random input-output responses. (A.1) Three random input-output responses $r(x)=(\alpha_0+\ldots+\alpha_n x^n)/(\beta_0+\ldots +\beta_n x^n)$ for $n=8$ and the Hill function $H_8(x)=x^8/(1+x^8)$. (A.2) Normalizations in the vertical direction $u(x)=(r(x)-m_{\alpha/\beta})/(M_{\alpha/\beta}-m_{\alpha/\beta})$  with respect to $M_{\alpha/\beta}=\max_i \alpha_i/\beta_i$ and $m_{\alpha/\beta} =\min_i\alpha_i/\beta_i$. (B.1) and (B.2) The sharpness $\sup_x |r'(x)\cdot x|$ of $r(x)$ and of its vertical normalization is bounded by the sharpness of the Hill function $n/4=2$. The vertical dashed lines indicate the $x$-value, where the sharpness is attained.}
    \label{fig:random_functions_rprime_x_normalized}
\end{figure}


\begin{rem}
    The steepness with input in $\log_{10}$-scale is 
    \[\frac{d}{d\log_{10}(x)}r=\log(10)\cdot r'(x)\cdot x\,.\]
\end{rem}

\section{Application: allosteric system with two conformations}\label{sec:allosteric}
We consider the allosteric system with two conformations and a single ligand binding site treated in \cite[Section~4, Figure~3b]{nam_linear_2022} and depicted in \cref{fig:intro_combined}A for parameters $k_1,k_2,\ldots, k_8$. The steady-state probability $r(x)$ of the ligand $x$ being bound depends on whether the steady state is of thermodynamic equilibrium. It has been computed in \cite{nam_linear_2022} using the linear framework. At thermodynamic equilibrium, the resulting input-output function is of Michaelis-Menten type
\[r_{eq}(x)=\frac{Ax}{B+Ax}\]
with $A=\frac{k_2}{k_8}(1 + \frac{k_3}{k_7})$ and $B=1+\frac{k_5}{k_1}$. If the steady state is not of thermodynamic equilibrium, then we obtain the input-output function
\[r_{ne}(x)=\frac{A^* x + B^* x^2}{C^* + D^* x + B^* x^2}\]
with 
\begin{align*}
A^* &= k_1 k_2 k_3 + k_3 k_5 k_6 + k_5 k_6 k_8 + k_1 k_2 k_7 + k_5 k_6 k_7 + k_1 k_2 k_4\\
B^* &= k_2k_3k_6 + k_2 k_6 k_7 \\
C^* &= k_1 k_7 k_8 + k_1 k_3 k_4 + k_1 k_4 k_8 + k_5 k_7 k_8 + k_3 k_4 k_5 + k_4 k_5 k_8 \\
D^* &= A^* + k_2 k_3 k_4 + k_6 k_7 k_8\,.
\end{align*}

\begin{ex}\label{ex:not_thermal_equilibrium}
   The function $r_{ne}$ for \[k_1=1,\quad k_2=2,\quad k_3=1,\quad k_4=1,\quad
k_5=10,\quad k_6=1,\quad k_7=1,\quad k_8=1\]
is
\[
r_{ne}(x)=\frac{36 x + 4 x^2}{33 + 39 x + 4x^2}
\]
and 
\[x\cdot r'_{ne}(x)= 12 \frac{x^3 + 22 x^2 + 99 x}{(4x^2 + 39 x + 33)^2}
\]
achieves the value $(12\cdot 122)/76^2\simeq 0.253$ at $x=1$ and thus surpasses the Hopfield barrier of $1/4$ for a single ligand binding site. 
\end{ex}

\section{Discussion}\label{sec:discussion}
For over a century, Hill functions have been used as empirical fits to biological input-output responses. Martinez-Corral et al.\ \cite{martinez-corral_hill_2024} proposed, on the basis of numerical evidence for $n=4$ and $6$, that Hill functions are the universal Hopfield barrier for sharpness of input-output responses, reducing the question to one about sharpness of rational functions $r$. Their measure of sharpness consists of two values, position and steepness. Using the supremum of $r'(x)\cdot x$ as our measure, we solve the remaining mathematical problem, characterizing Hill functions as the unique maximizers of sharpness. A related bound, using a different sensitivity measure and holding both at and away from thermodynamic equilibrium, was established by Owen and Horowitz \cite{owen_size_2023}. It would be interesting to find exact values of maximal sharpness for specific models, such as the allosteric system with two conformations from \cref{sec:allosteric}, and more broadly to characterize Hopfield barriers for other forms of biological information processing tasks as suggested in \cite{martinez-corral_hill_2024}.

\bibliographystyle{amsalpha}
\bibliography{bibl}

\end{document}